\newtheorem{lemma1}{}[section]
\newenvironment{lemma}{\begin{lemma1}{\bf Lemma.}}{\end{lemma1}}
\newenvironment{theorem}{\begin{lemma1}{\bf Theorem.}}{\end{lemma1}}
\newenvironment{proposition}{\begin{lemma1}{\bf Proposition.}}{\end{lemma1}}
\newenvironment{remark}{\begin{lemma1}{\bf Remark.}\rm}{\end{lemma1}}
\newenvironment{conjecture}{\begin {lemma1}{\bf Conjecture.}}{\end{lemma1}}
\newenvironment{remark*}{{\bf Remark.}}{}
\newenvironment{example*}{{\bf Example.}}{}
\newcommand{\R}{\ensuremath{\mathbb{R}}}
\newcommand{\Q}{\ensuremath{\mathbb{Q}}}
\newcommand{\N}{\ensuremath{\mathbb{N}}}
\newcommand{\merom}[3]{\ensuremath{#1:#2 \dashrightarrow #3}}
\newcommand{\holom}[3]{\ensuremath{#1:#2  \rightarrow #3}}
\newcommand{\fibre}[2]{\ensuremath{#1^{-1} (#2)}}
\newcommand\sO{{\mathcal O}}
\newcommand{\NAX}{\overline{\mbox{NA}}(X)}
\title{Adjoint $(1,1)$-classes on threefolds}
\date{July 23, 2018}
\author {Andreas H\"oring}
\address{Andreas H\"oring, Universit\'e C\^ote d'Azur, CNRS, LJAD, France}
\email{Andreas.Hoering@unice.fr}
\begin{document}

\begin{abstract} 
We answer a question of Filip and Tosatti concerning a basepoint-free theorem
for transcendental $(1,1)$-classes on threefolds.
\end{abstract}

\maketitle

\section{Introduction}

In a recent preprint Filip and Tosatti proposed the following transcendental version 
of the basepoint-free theorem:

\begin{conjecture} \label{conjectureFT} \cite[Conj.1.2]{FT17}
Let $X$ be a compact K\"ahler manifold, and let $\alpha$ be a nef $(1,1)$-class 
on $X$ such that $\alpha-K_X$ is nef and big. Then the class $\alpha$ is semiample: there
exists a morphism with connected fibres $\holom{\psi}{X}{Z}$ onto a normal compact K\"ahler space
and a K\"ahler class $\alpha_Z$ on $Z$ such that $\alpha = \psi^* \alpha_Z$. 
\end{conjecture}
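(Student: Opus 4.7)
My strategy is to adapt the classical Kawamata--Shokurov basepoint-free theorem to the Kähler setting by running a minimal model program that contracts only $\alpha$-trivial extremal rays. The principal tools will be the minimal model program for terminal compact Kähler threefolds (Höring--Peternell, Campana--Höring--Peternell) and the Collins--Tosatti description of the non-Kähler locus of a nef and big class.

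\emph{Step 1: producing an $\alpha$-trivial, $K_X$-negative extremal ray.} If $\alpha$ is Kähler then $\psi = \id_X$ works. Otherwise, by the Demailly--Paun numerical criterion $\alpha$ lies on the boundary of the Kähler cone, and I produce an irreducible curve $C$ with $\alpha \cdot C = 0$: if $\alpha$ is big, from Collins--Tosatti's characterisation of the non-Kähler locus, and if $\alpha$ is not big, via a direct analysis of the nef cone of a Kähler threefold. Since $\alpha - K_X$ is big and nef, its non-Kähler locus $\operatorname{Null}(\alpha - K_X)$ is a proper analytic subvariety of $X$; choosing an $\alpha$-trivial curve $C \not\subset \operatorname{Null}(\alpha - K_X)$ gives
\[
K_X \cdot C \;=\; \alpha \cdot C - (\alpha - K_X) \cdot C \;=\; -(\alpha - K_X) \cdot C \;<\; 0.
\]
By the cone theorem for Kähler threefolds the face of $\NA{X}$ on which $\alpha$ vanishes meets the $K_X$-negative half-space and thus contains a $K_X$-negative extremal ray $R$.

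\emph{Step 2: contract and iterate.} I contract $R$ via the Höring--Peternell contraction theorem to obtain $\varphi : X \to X_1$ (followed by a flip if $\varphi$ is small). Since $\alpha \cdot R = 0$, the class $\alpha$ descends to a nef $(1,1)$-class $\alpha_1$ on $X_1$ by the projection formula, and the negativity lemma shows that $\alpha_1 - K_{X_1}$ remains big and nef. Iterating Steps 1--2 with $(X,\alpha)$ replaced by $(X_1,\alpha_1)$, the Kähler MMP on terminal threefolds terminates in finitely many steps. The final output is either a model $Z$ on which $\alpha$ becomes Kähler -- and then the composition $X \to Z$ is a genuine morphism (not merely meromorphic) because every intermediate contraction and flip was $\alpha$-trivial -- or a Mori fibration $Z \to Z'$ with $\alpha$ vanishing on the fibers, in which case a further descent yields a Kähler class on $Z'$ pulled back to $\alpha$.

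\emph{Main obstacle.} The principal technical difficulty will be rigorously pushing the transcendental class $\alpha$ through each MMP step. In the projective case, descent of divisor classes through divisorial and flipping contractions is standard and proceeds via the negativity lemma together with vanishing theorems for line bundles; here one must push forward closed positive $(1,1)$-currents through possibly singular Kähler spaces, keep track of the resulting Bott--Chern cohomology class, and verify at each stage that both the nefness of $\alpha_i$ and the big-nef property of $\alpha_i - K_{X_i}$ persist. A related delicate point is the existence in Step 1 of an $\alpha$-trivial curve that lies outside $\operatorname{Null}(\alpha - K_X)$, which requires a careful analysis of the interplay between the two non-Kähler loci -- an analysis that, in the non-projective setting, is genuinely transcendental and cannot be reduced to divisor-theoretic arguments.
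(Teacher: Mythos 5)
There is a genuine gap, and in fact more than one. First, note that the statement you are proving is stated in the paper as a \emph{conjecture} in arbitrary dimension; the paper itself only establishes it for $\dim X=3$ under the stronger hypothesis that $\alpha-K_X$ is a \emph{K\"ahler} class (Theorem \ref{theoremmain}), and explicitly states that the case where $\alpha-K_X$ is merely nef and big is open. Your argument is confined to threefolds (it invokes the threefold MMP), so it cannot address the general conjecture; and even in dimension three your Step 1 contains the precise point where the nef-and-big case breaks down. You assert that one can choose an $\alpha$-trivial curve $C\not\subset\mbox{\rm Null}(\alpha-K_X)$, but there is no reason such a curve exists: all $\alpha$-trivial curves may lie inside $\mbox{\rm Null}(\alpha-K_X)$, in which case they satisfy $K_X\cdot C=0$ and the required contraction would be \emph{crepant}. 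The cone and contraction theorems for K\"ahler threefolds only produce $K_X$-negative extremal rays, so the MMP machinery gives you nothing in that situation. This is exactly the obstruction the paper identifies ("for crepant birational contractions the geometry of the exceptional locus is more complicated"). When $\alpha-K_X=\omega$ is K\"ahler this problem disappears, since then every $\alpha$-trivial curve is automatically $K_X$-negative; this is why the paper imposes that hypothesis.

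Second, even granting the K\"ahler hypothesis on $\alpha-K_X$, your Step 2 misstates the endgame. After a flip the pushed-forward class $\omega_i=\alpha_i-K_{X_i}$ is in general only \emph{modified K\"ahler} (big), not nef, so your claim that "$\alpha_1-K_{X_1}$ remains big and nef" by the negativity lemma is unjustified; the paper is careful to work with modified K\"ahler classes for this reason. More importantly, the terminal model $Y$ of the $\alpha_\bullet$-trivial MMP is \emph{not} a model on which $\alpha_Y$ is K\"ahler: flips can create $\alpha_Y$-trivial curves with $K_Y\cdot C\ge 0$, which no further MMP step can contract. The paper's proof requires two additional ingredients that your plan omits entirely: Lemma \ref{lemmacovered}, which shows (using the Hodge index theorem, adjunction, and Araujo's description of the mobile cone) that $\mbox{\rm Null}(\alpha_Y)$ contains no surface, and Proposition \ref{propositionsmall}, which contracts the resulting one-dimensional Null locus to points by a non-MMP argument (the nef supporting class construction of \cite[Thm.7.12]{HP16}). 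One also needs the preliminary dichotomy ($H^2(X,\sO_X)=0$, reducing to the projective basepoint-free theorem, versus the cases where the K\"ahler cone/contraction theorems of \cite{HP16,HoPe2} apply), and a verification that the composite map $X\to Z$ is holomorphic across the flips. Without these steps the argument does not close.
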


If $\alpha$ is the class of a $\Q$-divisor (and hence $X$ is projective), this statement is equivalent to the 
basepoint-free theorem (cf. \cite[Thm.3.9.1]{BCHM10} for $\R$-divisors). 
Filip and Tosatti proved this conjecture for surfaces \cite[Thm.1.3]{FT17}. 
The aim of this note is to clarify the situation in dimension three:

\begin{theorem} \label{theoremmain}
Conjecture \ref{conjectureFT} holds if $\dim X=3$ and $\alpha-K_X$ is a K\"ahler class.
\end{theorem}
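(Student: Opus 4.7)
My plan is to apply the minimal model program (MMP) for K\"ahler threefolds, as developed by H\"oring--Peternell and Campana--H\"oring--Peternell.

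\textbf{Initial reduction.} If $\alpha$ is itself K\"ahler, we set $Z = X$, $\psi = \id_X$, $\alpha_Z = \alpha$, and we are done. Otherwise, by Demailly--Paun's numerical characterization of the K\"ahler cone, the face
\[
F := \alpha^\perp \cap \overline{\mathrm{NA}}(X)
\]
is nontrivial. Write $\beta := \alpha - K_X$. The key observation is that every nonzero $\gamma \in F$ satisfies $K_X \cdot \gamma = -\beta \cdot \gamma < 0$, since $\beta$ is K\"ahler and $\alpha \cdot \gamma = 0$. Thus $F \setminus \{0\}$ lies in the $K_X$-negative open half-space, and by the K\"ahler cone theorem for threefolds, $F$ contains a $K_X$-negative extremal ray $R$, which is contractible to a normal compact K\"ahler space $\phi_R : X \to X'$ by the K\"ahler contraction theorem.

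\textbf{Trichotomy and iteration.} If $\phi_R$ is a Mori fibration onto $Z := X'$ with $\dim Z < 3$, then $\alpha$ is trivial on fibres and descends to a nef class $\alpha_Z$ on $Z$. When $\dim Z \le 1$ the class $\alpha_Z$ is automatically K\"ahler (or zero). When $\dim Z = 2$ one reduces to the surface case \cite[Thm.1.3]{FT17} of the Filip--Tosatti conjecture applied to $\alpha_Z$ on the normal K\"ahler surface $Z$, which requires verifying the hypothesis ``$\alpha_Z - K_Z$ nef and big'' by pushforward. If $\phi_R$ is divisorial with exceptional divisor $E$, then $\alpha = \phi_R^* \alpha'$ and from the discrepancy formula $\phi_R^* K_{X'} = K_X - aE$ with $a > 0$ we obtain
\[
\phi_R^*(\alpha' - K_{X'}) = \beta + aE,
\]
and applying $(\phi_R)_*$ yields $\alpha' - K_{X'} = (\phi_R)_* \beta$. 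One then shows this class is again K\"ahler on the normal compact K\"ahler space $X'$ via a Demailly--Paun criterion, and iterates with $(X', \alpha')$. Small contractions are handled via K\"ahler flips, whose existence in dimension three is known. Termination of the process follows from termination of the K\"ahler threefold MMP.

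\textbf{Main obstacle.} The technical heart lies in two points. First, preserving the inductive hypothesis through divisorial contractions and flips: one must establish that $(\phi_R)_* \beta$ remains K\"ahler on the possibly singular $X'$, which requires the Demailly--Paun criterion on normal compact K\"ahler spaces together with a case-by-case analysis of the integrals $\int_{V'} ((\phi_R)_* \beta)^{\dim V'}$ as $V'$ varies over subvarieties of $X'$ and interacts with the exceptional locus. Second, the fibration case with $\dim Z = 2$ is delicate: one must verify that the descended pair $(Z, \alpha_Z)$ meets the hypotheses of \cite[Thm.1.3]{FT17}, which calls for a pushforward/pullback computation relating $\beta$ with the relative dualizing class $K_{X/Z}$ of the Mori fibration, using that $-K_{X/Z}$ is relatively K\"ahler.
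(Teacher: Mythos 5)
Your overall strategy (run an MMP contracting only $\alpha$-trivial $K$-negative extremal rays) is the same as the paper's, but the proposal has a genuine gap at the flip step, and it is precisely the difficulty the paper singles out. Your ``key observation'' --- that every nonzero $\gamma$ in the face $F=\alpha^\perp\cap\overline{\mathrm{NA}}(X)$ satisfies $K_X\cdot\gamma=-\beta\cdot\gamma<0$ --- holds on $X$ but is destroyed by the first flip. If $X\dashrightarrow X^+$ is the flip of a small $\alpha$-trivial ray, the flipped curves $C^+$ satisfy $K_{X^+}\cdot C^+>0$ and $\alpha^+\cdot C^+=0$, hence $(\alpha^+-K_{X^+})\cdot C^+<0$: the pushed-forward class $\beta^+=\alpha^+-K_{X^+}$ is not even nef, so the inductive hypothesis you propose to ``preserve through divisorial contractions and flips'' fails (and the analogous claim that $(\phi_R)_*\beta$ stays K\"ahler after a divisorial contraction is also not true in general --- the paper only ever asserts it is \emph{modified} K\"ahler). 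Worse, these $\alpha$-trivial curves are $K$-positive, so no continuation of the MMP can ever contract them; after termination you are left with a space $Y$ on which $\alpha_Y$ is nef and big but still has a nonempty one-dimensional Null locus, and your argument offers no mechanism to contract it.

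The paper's proof of Theorem \ref{theoremcontractionfaces} addresses exactly this: it tracks only the property that $\omega_\bullet=\alpha_\bullet-K_{X_\bullet}$ is modified K\"ahler (which does survive flips and divisorial contractions), uses this via Lemma \ref{lemmacovered} to rule out surface components of $\mbox{\rm Null}(\alpha_Y)$ after the MMP, and then applies a separate, non-MMP contraction (Proposition \ref{propositionsmall}, resting on \cite[Thm.7.12]{HP16} and the Collins--Tosatti theorem) to contract the remaining $\alpha_Y$-trivial, possibly $K_Y$-positive, curves; one must then check that the composite $X\dashrightarrow Z$ is holomorphic and that $\alpha$ (on $X$, not on $Y$) descends to a K\"ahler class on $Z$. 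None of these steps appears in your proposal. Two smaller points: the cone and contraction theorems you invoke are only available when $K_X$ is pseudoeffective or the MRC fibration has a surface base --- otherwise $H^2(X,\sO_X)=0$ and one falls back on the projective basepoint-free theorem (cf.\ Remark \ref{remarkMMP}); and the fibre-type case is handled in the paper not inside the MMP but by quoting \cite[Thm.1.4]{TZ18} for $\alpha$ nef and not big, which cleanly separates it from the birational case, since a big class cannot be trivial on a covering family of curves.
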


The assumption is slightly stronger than in the conjecture, but 
should be enough for potential applications to the K\"ahler-Ricci flow.
Theorem \ref{theoremmain} was shown by Tosatti and Zhang \cite[Thm.1.4]{TZ18} when the class $\alpha$
is nef, but not big, so we only have to deal with the case where the morphism $\psi$
is bimeromorphic. More precisely we prove the following:

\begin{theorem} \label{theoremcontractionfaces}
Let $X$ be a normal $\Q$-factorial compact K\"ahler threefold with terminal singularities. 
Let $\omega$ be a K\"ahler class on $X$ such that $\alpha:= K_X+\omega$ is nef and big. 
Then there exists a bimeromorphic morphism $\holom{\psi}{X}{Z}$ onto a normal compact K\"ahler space
with isolated rational singularities and a K\"ahler class $\alpha_Z$
on $Z$ such that $\alpha = \psi^* \alpha_Z$.
\end{theorem}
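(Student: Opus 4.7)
Since $\omega$ is K\"ahler and $\alpha = K_X + \omega$ is nef, every irreducible curve $C \subset X$ with $\alpha \cdot C = 0$ satisfies $K_X \cdot C = -\omega \cdot C < 0$. The plan is therefore to run an MMP that contracts exactly the $\alpha$-trivial curves in $X$, and then show that the descended class on the resulting model is K\"ahler.

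First, I would study the null locus
\[
\mathrm{Null}(\alpha) \;=\; \bigcup_{V} V,
\]
the union of irreducible analytic subvarieties $V \subset X$ with $(\alpha|_V)^{\dim V}=0$. For a nef and big class on a $\Q$-factorial terminal K\"ahler threefold, a Collins--Tosatti type argument should identify $\mathrm{Null}(\alpha)$ with the non-K\"ahler locus of $\alpha$, so it is a proper analytic subset of $X$. Consequently it suffices to build a bimeromorphic $\psi : X \to Z$ contracting $\mathrm{Null}(\alpha)$ such that the pushed-forward class is K\"ahler.

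Second, I would invoke the cone and contraction theorem for $\Q$-factorial compact K\"ahler threefolds with terminal singularities (Campana--H\"oring--Peternell) to select an extremal ray $R \subset \NE{X}$ with $\alpha \cdot R = 0$; by the opening observation one has $K_X \cdot R < 0$, so the associated contraction $\varphi: X \to X'$ exists. If $\varphi$ is divisorial, $X'$ is again a $\Q$-factorial terminal K\"ahler threefold. If $\varphi$ is small, I would replace it by the flip $X \dashrightarrow X^+$ (existence due to H\"oring--Peternell). In both cases the class $\alpha$ descends to a nef $(1,1)$-class $\alpha'$ on the new model --- for the divisorial contraction by the projection formula, and for the flip by pulling back to a common resolution and applying the negativity lemma to justify descent. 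To iterate, I need $\omega' := \alpha' - K_{X'}$ to be K\"ahler again, which I would establish via the transcendental Kleiman criterion (Demailly--P\u{a}un) combined with how the K\"ahler cone changes under the contraction or flip. By termination of the $K_X$-MMP in the K\"ahler threefold category one reaches $Z$ on which the descended class $\alpha_Z$ has empty null locus, hence is K\"ahler; composing all intermediate maps yields the desired $\psi: X \to Z$, and the singularities of $Z$ are terminal, hence isolated and rational.

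The main obstacle I expect is \emph{keeping the decomposition} $\alpha = K + (\text{K\"ahler})$ alive throughout the induction: while the descent of $\alpha$ itself is fairly formal, showing that $\omega' = \alpha' - K_{X'}$ is genuinely K\"ahler (and not merely nef) after each flip is delicate, since a flip can destroy positivity of a fixed class; one likely has to perturb within the relative K\"ahler cone. A secondary subtlety is handling possible divisorial components $D \subset \mathrm{Null}(\alpha)$: restricting to $D$ gives $K_X|_D = \alpha|_D - \omega|_D$ with $\omega|_D$ K\"ahler and $\alpha|_D$ nef but numerically degenerate, and one has to extract from this enough $K_X$-negativity to trigger a divisorial contraction inside the Kähler MMP framework.
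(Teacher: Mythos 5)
Your reduction and your Step 1 coincide with the paper's: run an MMP contracting only $\alpha_\bullet$-trivial, $K_\bullet$-negative extremal rays. The genuine gap is in the endgame, and it is exactly the difficulty the paper is organised around. After a \emph{flip} $X_{i-1}\dashrightarrow X_i$ the flipped curve $C^+$ satisfies $\alpha_i\cdot C^+=0$ and $K_{X_i}\cdot C^+>0$, hence $\omega_i\cdot C^+=-K_{X_i}\cdot C^+<0$. So $\omega_i$ is \emph{never} K\"ahler after a flip, and no perturbation inside any relative K\"ahler cone can repair this: the defect is forced by the numerics of the flip, not by a bad choice of representative. Consequently the implication ``$\alpha$-trivial $\Rightarrow K$-negative'' is destroyed after the first flip, and the terminal model $Y$ of your MMP can still carry $\alpha_Y$-trivial curves --- precisely the flipped curves --- which are $K_Y$-positive and hence invisible to any further MMP step. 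Your assertion that termination produces a model with empty null locus, so that the descended class is K\"ahler and $Z$ is terminal, is therefore false; in the paper $Z$ is in general neither $\Q$-factorial nor terminal, only normal with isolated rational singularities.

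What the paper does instead, and what is missing from your sketch: (i) it records only that $\omega_i$ stays \emph{modified} K\"ahler (since no divisor is contracted by $\mu_i^{-1}$), which suffices, via Lemma \ref{lemmacovered}, to exclude surfaces from $\mbox{\rm Null}(\alpha_Y)$ --- such a surface would be covered by $\alpha_Y$-trivial curves meeting $\omega_Y$ positively, hence $K_Y$-negatively, contradicting termination of the MMP; (ii) the remaining one-dimensional null locus is contracted not by an MMP step but by the separate small contraction of Proposition \ref{propositionsmall} (the nef-supporting-class argument of \cite[Thm.7.12]{HP16}); (iii) one must then verify that the composition $X\dashrightarrow Z$ is holomorphic (the flipped curves are $\alpha$-trivial, hence collapsed by the second contraction) and descend $\alpha$ via relative Kodaira vanishing, \cite[Lemma 3.3.]{HP16} and Lemma \ref{lemmakaehlerclass}. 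A secondary omission: the cone and contraction theorems you invoke exist in the K\"ahler category only when $K_X$ is pseudoeffective or the MRC base is a surface; in the remaining cases $H^2(X,\mathcal{O}_X)=0$, $X$ is projective, and one concludes directly with the basepoint-free theorem for $\R$-divisors.
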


If the curves contracted by $\psi$ define an extremal {\em ray} in the (generalised) Mori cone of $X$, this is 
the contraction theorem for K\"ahler threefolds (see Remark \ref{remarkMMP}). In the setting of Theorem \ref{theoremcontractionfaces}
we contract, more generally, an extremal {\em face} of the Mori cone. The obvious idea is to reduce to the case
of extremal rays by running a MMP $X \dashrightarrow Y$ where we contract only $\alpha_\bullet$-trivial $K_\bullet$-negative extremal rays. However $Y$ is typically {\em not} the space we are looking for. 
Indeed if one of the steps of the MMP is a flip,
the space $Y$ can contain $\alpha_Y$-trivial curves that are $K_Y$-positive. These curves can not contracted by the MMP, 
so we need some additional argument.

At the moment we do not know how to prove Conjecture \ref{conjectureFT} if the class $\alpha-K_X$ is merely nef and big.
The main reason is that for crepant birational contractions the geometry of the exceptional locus is more complicated,
so the strategy used for $K_X$-negative extremal rays in 
\cite{HP16, CHP16} will not work.
However it is easy to settle a special case:

\begin{proposition} \label{propositionCY}
Let $X$ be a normal $\Q$-factorial compact K\"ahler threefold with terminal singularities
such that $K_X \equiv 0$. 
Let $\alpha$ be a nef and big $(1,1)$-class on $X$.
Then there exists a bimeromorphic morphism $\holom{\psi}{X}{Z}$ onto a normal compact K\"ahler space
with rational singularities and a K\"ahler class $\alpha_Z$
on $Z$ such that $\alpha = \psi^* \alpha_Z$.
\end{proposition}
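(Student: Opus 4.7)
The plan is to reduce Proposition~\ref{propositionCY} to a log variant of Theorem~\ref{theoremcontractionfaces}, exploiting that $K_X\equiv 0$ lets us encode the bigness of $\alpha$ as a boundary divisor. A direct application of Theorem~\ref{theoremcontractionfaces} fails: in the Calabi--Yau case the hypothesis would demand $\alpha\equiv\alpha-K_X$ to be K\"ahler, which is strictly stronger than nef and big, and Theorem~\ref{theoremcontractionfaces} applied with any K\"ahler $\omega$ gives $K_X+\omega\equiv\omega$ K\"ahler, contracting no curves.

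Since $\alpha$ is big and $X$ is compact K\"ahler, the theory of big $(1,1)$-classes (Demailly, Boucksom) should allow a decomposition
\[
\alpha \equiv \omega + [D] \quad\text{in } H^{1,1}(X,\R),
\]
with $\omega$ a K\"ahler class and $D$ an effective $\R$-divisor. Since bigness is an open condition, one first replaces $\alpha$ by $\alpha-\epsilon\omega_0$ for a fixed K\"ahler class $\omega_0$ and a small $\epsilon>0$; this allows the coefficients of $D$ to be made arbitrarily small, and because $X$ has terminal singularities the pair $(X,D)$ is then klt. Using $K_X\equiv 0$, one obtains
\[
K_X + D + \omega \equiv D + \omega \equiv \alpha,
\]
so $K_X+D+\omega$ is nef and big, and the $\alpha$-trivial curves on $X$ are exactly the $(K_X+D+\omega)$-trivial curves.

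Applying a log analogue of Theorem~\ref{theoremcontractionfaces} to the klt pair $(X,D)$ with K\"ahler class $\omega$ should then produce the desired bimeromorphic morphism $\psi\colon X\to Z$ onto a normal compact K\"ahler space, together with a K\"ahler class $\alpha_Z$ on $Z$ satisfying $\alpha=\psi^*\alpha_Z$; the techniques of \cite{HP16, CHP16} ought to carry over to the log setting with only formal modifications, since $(X,D)$ is klt and $D$ is small. Finally, $Z$ inherits rational singularities from the fact that $X$ has terminal (hence rational) singularities and $\psi$ is a bimeromorphic contraction with connected fibres onto a normal space.

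The hard part is the decomposition step: representing the transcendental class $\alpha$ as a K\"ahler class plus an honest effective $\R$-divisor with controlled coefficients is subtle when $\alpha$ is not the class of a $\Q$-divisor, because a big class in general only admits a K\"ahler current representative and not a current of integration along a divisor. A careful use of Boucksom's divisorial Zariski decomposition together with a Demailly regularization is likely needed to extract such a $D$, and further care is required to guarantee klt; the log version of Theorem~\ref{theoremcontractionfaces} itself should then follow by a routine adaptation of the existing arguments.
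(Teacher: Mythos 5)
Your approach is genuinely different from the paper's, and it has two substantial gaps. The first is the decomposition $\alpha \equiv \omega + [D]$ with $\omega$ K\"ahler and $D$ an effective $\R$-divisor. You correctly sense that this is the hard point, but it is not merely subtle: there is no transcendental analogue of Kodaira's lemma. A big class contains a K\"ahler current, so $\alpha - \epsilon\omega_0$ is pseudoeffective for small $\epsilon$, but a pseudoeffective $(1,1)$-class on a compact K\"ahler threefold is represented by a closed positive current, not by an effective divisor; Boucksom's divisorial Zariski decomposition does not help, since for the \emph{nef} class $\alpha$ the negative (divisorial) part vanishes identically, and the positive part of a nearby non-nef class is only modified nef, not K\"ahler. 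In the very setting of this proposition $X$ may carry few or no divisors through the relevant curves (e.g.\ when $X$ is covered by $E\times S$ with $S$ a non-projective K3), so the existence of such a $D$ would itself require the structure theory that the paper invokes.

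The second gap is the appeal to a ``log analogue of Theorem~\ref{theoremcontractionfaces}'' for klt K\"ahler pairs, obtained by ``routine adaptation.'' The machinery actually used in the proof of Theorem~\ref{theoremcontractionfaces} --- the cone and contraction theorems of \cite{HP16, HoPe2} and Mori's termination of flips \cite{Mor88} --- is stated for terminal threefolds and $K_X$-negative extremal rays; a cone theorem, contraction theorem, and existence and termination of flips for klt K\"ahler threefold \emph{pairs} is not available in the cited literature, and the introduction of the paper explicitly states that the strategy of \cite{HP16, CHP16} does not extend to the crepant situation because the geometry of the exceptional locus is more complicated (Lemma~\ref{lemmacovered}, for instance, uses $K_Y = \alpha_Y - \omega_Y$ with $\omega_Y$ modified K\"ahler in an essential way). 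The paper avoids both issues entirely: it applies the decomposition theorem for $K_X\equiv 0$ \cite{Dru17, CHP16} to pass to a finite quasi-\'etale cover that is a torus, a (projective) Calabi--Yau threefold, or $E\times S$ with $S$ a K3 surface; the first two cases are immediate or algebraic, and in the third the class splits as $\lambda F + p_S^*\alpha_S$, so the problem reduces to Filip--Tosatti's surface theorem on $S$ followed by Galois descent. You should either restrict your strategy to situations where $\alpha$ is an $\R$-divisor class (where it recovers the standard basepoint-free argument) or adopt the reduction to the structure theorem.
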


The proof is based on the observation that the decomposition theorem \cite{CHP16, Dru17} 
essentially reduces the problem to the case of surfaces proven by Filip and Tosatti.
In a similar spirit the Beauville-Bogomolov decomposition \cite{Bea83} reduces Conjecture \ref{conjectureFT} for manifolds
with trivial canonical class to the case of Hyperk\"ahler manifolds. This case is of independent interest (cf. \cite{AV18}),
we hope to come back to it in the future.

{\bf Acknowledgement.} I thank Valentino Tosatti for encouraging me to write up this note.

\section{Definitions and terminology}

We will use standard terminology  
of the minimal model program (MMP) as explained in \cite{KM98} or \cite{Deb01},
see also \cite[Sect.2-3]{HP16} for minor modifications in the K\"ahler setting.
For notions from analytic geometry we refer to \cite{DP04}, \cite{Dem12}.

Let $X$ be a normal compact complex space with at most rational singularities.
Suppose that $X$ is in the Fujiki class, i.e. $X$ is bimeromorphic to a compact K\"ahler manifold. 
A $(1,1)$-class on $X$ is an element of $H^{1,1}_{\rm BC}(X)$,
the Bott-Chern group of $(1,1)$-currents that are locally $\partial \bar \partial$-exact (cf. \cite[Sect.2]{HP16} for details). 
Even if $X$ is projective, the inclusion 
$$
\mbox{NS}(X) \otimes \R \subset H^{1,1}_{\rm BC}(X)
$$
is typically not an equality. However if $H^2(X, \sO_X)=0$, we can apply Kodaira's criterion to a desingularisation
to see that $X$ is projective and every $(1,1)$-class is an $\R$-divisor class.

Let $\alpha \in H^{1,1}_{\rm BC}(X)$ be a $(1,1)$-class. An irreducible curve $C \subset X$ is $\alpha$-trivial
if $\alpha \cdot C=0$.
The class $\alpha$ is big if it contains a K\"ahler current, it is modified K\"ahler
if there exists a modification \holom{\mu}{X'}{X} and a K\"ahler form $\omega$ on $X'$
such that $[\mu_* \omega] = \alpha$. A modified K\"ahler class is big, moreover for
every hypersurface $S \subset X$, the restriction $\alpha|_S$ is big: since
the hypersurface $S$ is not in the image of the $\mu$-exceptional locus,
the restriction of the current $\mu_* \omega$ to $S$ is well-defined and a K\"ahler current. 

Following \cite[Defn.3]{Pau98} one can define when a $(1,1)$-class is nef. A nef $(1,1)$-class is always nef in the algebraic sense, i.e. for every subvariety $Z \subset X$ we have 
$$
Z \cdot \alpha^{\dim Z} \geq 0.
$$
For a nef and big class $\alpha$ on $X$ one defines the Null locus
$$
\mbox{\rm Null}(\alpha) = \cup_{Z \cdot \alpha^{\dim Z} = 0} Z,
$$ 
where the union runs over all positive-dimensional subvarieties.
A priori the Null locus is a countable union of subvarieties, but by a theorem
of Collins and Tosatti \cite{CT15} (applied to the pull-back of the class $\alpha$ to a desingularisation) we know that the Null locus has only finitely many irreducible components.
Moreover we have the following :

\begin{lemma} \label{lemmakaehlerclass}
Let  $X$ be a normal compact complex space with isolated rational singularities.
Let $\alpha \in H^{1,1}_{\rm BC}(X)$ be a nef and big class such that  $Z \cdot \alpha^{\dim Z} > 0$
for every positive-dimensional subvariety. Then $\alpha$ is a K\"ahler class, i.e.
there exists a K\"ahler form $\omega$ on $X$ such that $[\omega]=\alpha$.
\end{lemma}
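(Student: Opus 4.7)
The plan is to transport the problem to a smooth model by resolving the singularities, invoke the Demailly--Paun type theorem of Collins--Tosatti \cite{CT15} to obtain a K\"ahler current with controlled singular locus, and then descend using that $\sing(X)$ is zero-dimensional.

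Let $\pi : \tilde X \to X$ be a resolution of singularities with exceptional divisor $E$. Since $\sing(X)$ consists of isolated points, each irreducible component of $E$ is contracted to a single point and $\pi$ is an isomorphism above $X \setminus \sing(X)$. The pullback $\pi^*\alpha$ is nef and big on $\tilde X$. For any positive-dimensional subvariety $\tilde Z \subset \tilde X$ not contained in $E$, the restriction $\tilde Z \to Z := \pi(\tilde Z)$ is birational onto a positive-dimensional subvariety of $X$, so the projection formula combined with the hypothesis gives
\[
\tilde Z \cdot (\pi^*\alpha)^{\dim \tilde Z} = Z \cdot \alpha^{\dim Z} > 0.
\]
Hence $\mbox{\rm Null}(\pi^*\alpha) \subseteq E$. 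By Collins--Tosatti, the non-K\"ahler locus of $\pi^*\alpha$ coincides with its Null locus, so there exists a K\"ahler current $T$ in the class $\pi^*\alpha$ with analytic singularities supported in $E$ and smooth on $\tilde X \setminus E$. Its direct image $\pi_*T$ is then a closed positive $(1,1)$-current representing $\alpha$ that restricts to a smooth K\"ahler form on $X \setminus \sing(X)$.

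The main obstacle is the final step: extending $\pi_*T$ across the finitely many singular points to a genuinely smooth K\"ahler form representing $\alpha$. Here one must use that $\sing(X)$ is zero-dimensional and rational. On a Stein neighborhood $U_p$ of each singular point $p$ a local analysis using rationality produces a smooth strictly plurisubharmonic potential $\varphi_p$ with $i\partial\bar\partial \varphi_p$ representing $\alpha|_{U_p}$, i.e.\ a local K\"ahler form in the class. Glue $\pi_*T$ (on the complement of small neighborhoods of $\sing(X)$) with the local K\"ahler forms $i\partial\bar\partial \varphi_p$ using a partition of unity, and correct the resulting glueing error by adding a small multiple of a background globally K\"ahler form on $X$ together with a Demailly regularisation; this yields a smooth K\"ahler representative of $\alpha$ on all of $X$. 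The delicate point, where all hypotheses enter, is ensuring that the correction can be absorbed while staying in the fixed Bott--Chern class $\alpha$, which is precisely where the isolatedness of the singularities (finiteness of local patches) and their rationality (vanishing of local obstructions to finding smooth potentials) are used.
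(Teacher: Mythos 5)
Your first steps match the paper's argument: resolve, observe that $\mathrm{Null}(\pi^*\alpha)$ is contained in the exceptional locus, invoke Collins--Tosatti to identify the non-K\"ahler locus with the Null locus, produce a K\"ahler current singular only along $E$, and push forward to get a K\"ahler current in $\alpha$ that is singular only at the finitely many singular points of $X$. (One small citation gap: Collins--Tosatti identifies the non-K\"ahler locus, which is an \emph{intersection} of singular loci of K\"ahler currents; to get a \emph{single} K\"ahler current whose singular locus is exactly $E$ the paper additionally combines Demailly's regularisation with Boucksom's result that this intersection is attained.)

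The genuine gap is in your last step. A partition of unity is the wrong gluing device for positive $(1,1)$-currents: for a cutoff $\chi$ and potential $\varphi$, the form $i\partial\bar\partial(\chi\varphi)$ contains cross terms $i\partial\chi\wedge\bar\partial\varphi+i\partial\varphi\wedge\bar\partial\chi$ that destroy positivity, and your proposed repair --- adding a small multiple of ``a background globally K\"ahler form on $X$'' --- both changes the Bott--Chern class, so the result no longer represents $\alpha$, and presupposes that $X$ admits a K\"ahler form at all, which is not among the hypotheses (in the application $X$ is only known to be in the Fujiki class; that it is K\"ahler is essentially the conclusion). You flag this as ``the delicate point'' but do not resolve it, and as stated the construction does not close. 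The tool that actually works, and the one the paper uses following the proof of Theorem 1.3 of Filip--Tosatti, is Demailly's \emph{regularised maximum}: near each singular point $p$ write the pushed-forward K\"ahler current as $i\partial\bar\partial\varphi$ with $\varphi$ quasi-psh, tending to $-\infty$ at $p$ and smooth elsewhere; choose a smooth strictly psh local potential $u$ for the class near $p$ (available because classes in $H^{1,1}_{\rm BC}$ are by definition locally $\partial\bar\partial$-exact --- rationality is not what produces it); after adding a constant, $u<\varphi$ near the boundary of the neighbourhood while $u>\varphi$ near $p$, and $\max_\varepsilon(\varphi,u)$ is again strictly psh, agrees with $\varphi$ near the boundary (hence glues to the global current without changing the class) and is smooth near $p$. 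Doing this at the finitely many singular points yields the K\"ahler form. This is where isolatedness is really used; your appeal to rationality for ``vanishing of local obstructions to finding smooth potentials'' is not where rationality enters the argument.
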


\begin{proof}
Let \holom{\mu}{X'}{X} be a resolution of singularities. Then $\mu^* \alpha$ is a nef and big
class such that the Null locus coincides with the $\mu$-exceptional locus.
By \cite[Thm.1.1]{CT15} this implies that the non-K\"ahler locus of $\mu^* \alpha$
coincides with the $\mu$-exceptional locus.
Combining Demailly's regularisation \cite{Dem92} and \cite[Thm.3.17(ii)]{Bou04}
this implies that there exists a K\"ahler current $T$ in the class $\mu^* \alpha$
which is singular exactly along the exceptional locus. The push-forward
$\mu_* T$ is thus a K\"ahler current in the class $\alpha$ that is singular at most in the 
finitely many singular points of $X$.  Using a regularized maximum, we obtain the smooth form $\omega$ (cf. the proof of \cite[Thm.1.3]{FT17} for details).
\end{proof}

\section{Bimeromorphic geometry}

The following lemma is a consequence of Araujo's description of the mobile cone \cite[Thm.1.3]{Ara10}, the proof is part of the proof of \cite[Prop.7.11]{HP16}.

\begin{lemma} \label{lemmaaraujo}
Let $F$ be a projective surface such that $H^2(F, \sO_F)=0$. Let $\alpha_F$ be a nef (1,1)-class such that 
$$
\alpha_F^2=0, \quad K_F \cdot \alpha_F<0.
$$
Then $F$ is covered by $\alpha_F$-trivial curves.
\end{lemma}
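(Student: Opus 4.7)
The strategy is to show that $\alpha_F$ spans a rational ray of the nef cone of $F$, and then apply the basepoint-free theorem to obtain a fibration onto a curve whose fibers are $\alpha_F$-trivial.

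Since $H^2(F,\sO_F)=0$, Kodaira's criterion applied to a resolution (as recalled in the paper's preliminaries) implies that $\alpha_F$ is an $\R$-divisor class. Being nef and nonzero with $K_F\cdot\alpha_F<0$, the class $\alpha_F$ obstructs pseudo-effectivity of $K_F$, so $F$ is uniruled. Let $\Phi$ be the minimal face of the nef cone of $F$ containing $\alpha_F$ in its relative interior. I claim $\Phi$ is the ray $\R_{\ge0}\cdot\alpha_F$. Indeed, the intersection form is nonnegative on the nef cone of a surface; expanding $((1-t)\alpha_F+t\gamma)^2\ge0$ for $\gamma\in\Phi$ and $t$ in a two-sided neighborhood of $0$ (allowed since $\alpha_F$ is in the relative interior of $\Phi$) forces $\alpha_F\cdot\gamma=0$. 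Thus $\Phi\subset\alpha_F^\perp$, where by Hodge index the intersection form is negative semidefinite with one-dimensional kernel $\langle\alpha_F\rangle$; combined with $\gamma^2\ge0$, this yields $\gamma\in\langle\alpha_F\rangle$ for every $\gamma\in\Phi$.

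Next I would invoke Araujo's theorem \cite[Thm.1.3]{Ara10}: the $K_F$-negative part of the mobile cone of $F$ is locally rational polyhedral, and on a surface the mobile cone coincides with the nef cone. Consequently $\Phi$ is a rational ray, so $\alpha_F$ is proportional to a rational nef class $\beta$ with $\beta^2=0$ and $K_F\cdot\beta<0$. By the basepoint-free theorem, $\beta$ is semiample and defines a contraction $\psi\colon F\to Y$ with $\beta=\psi^*H$ for an ample class $H$ on $Y$. The vanishing $\beta^2=0$ forces $\dim Y<2$, while $\beta\ne0$ excludes $\dim Y=0$, so $Y$ is a curve, $\psi$ is a fibration, and its fibers cover $F$ and are $\beta$-trivial, hence $\alpha_F$-trivial.

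The main obstacle is establishing the rationality of $\Phi$. The quadratic/Hodge-index computation identifying $\Phi$ with the ray through $\alpha_F$ is elementary, but to conclude that this ray is rational one really does need the local polyhedrality of the $K_F$-negative part of the mobile cone from \cite{Ara10}. Without this, $\alpha_F$ could a priori be an irrational null-nef class on $F$ giving rise to no morphism, and the argument would fail.
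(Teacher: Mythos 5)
Your overall route coincides with the paper's: the proof the paper points to (that of \cite[Prop.7.11]{HP16}) likewise uses the Hodge index theorem to show that $\alpha_F$ spans an extremal ray of the nef cone, identifies the nef cone of a surface with its mobile cone, and invokes \cite[Thm.1.3]{Ara10} to handle that ray. Your reduction to an $\R$-divisor class and your argument that the minimal face $\Phi$ is the ray $\R_{\geq 0}\alpha_F$ are correct and complete.

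The gap is in the step you treat as routine, not in the one you flag. Having produced a rational nef class $\beta$ with $\beta^2=0$ and $K_F\cdot\beta<0$, you assert that ``by the basepoint-free theorem, $\beta$ is semiample.'' But the basepoint-free theorem requires $a\beta-K_F$ to be nef and big for some $a>0$, and this hypothesis can fail under exactly the assumptions of the lemma: if there is a curve $C$ with $\beta\cdot C=0$ and $K_F\cdot C>0$, then $(a\beta-K_F)\cdot C<0$ for every $a$. For a concrete instance, let $F$ be the blow-up of $\PP^1\times\PP^1$ at three points lying on a single ruling $f$ of the first projection and let $\beta=\pi^*f$; then $H^2(F,\sO_F)=0$, $\beta$ is nef with $\beta^2=0$ and $K_F\cdot\beta=-2$, yet the strict transform $\tilde f$ satisfies $\beta\cdot\tilde f=0$ and $K_F\cdot\tilde f=1$, so $a\beta-K_F$ is never nef (although $\beta$ is of course still semiample there). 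The repair does not need a new idea. Araujo's theorem does more than assert rationality: it exhibits the extremal rays of the $(K_F)$-negative part of the mobile cone as spanned by pullbacks of ample classes under Mori fibre space structures on birational models, which on a surface directly yields a fibration $F\to Y$ onto a curve with $\beta$-trivial fibres. Alternatively, once $\beta$ is known to be rational (and $F$ smooth, as in the paper's application), Riemann--Roch gives $h^0(m\beta)\to\infty$, since $h^2(m\beta)=h^0(K_F-m\beta)=0$ for $m\gg 0$ and $\chi(m\beta)=\chi(\sO_F)-\tfrac{m}{2}\,K_F\cdot\beta$; the moving part $M$ of $|m\beta|$ is then nef with $M\cdot\beta=0$, hence proportional to $\beta$ by your Hodge index computation, and $M^2=0$ forces $|M|$ to be basepoint-free, giving the desired fibration. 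Either of these replaces the unjustified appeal to the basepoint-free theorem.
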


We will need a slightly stronger version of \cite[Prop.7.11]{HP16}:

\begin{lemma} \label{lemmacovered}
Let $Y$ be a normal $\Q$-factorial compact K\"ahler threefold with terminal singularities. 
Let $\omega_Y$ be a modified K\"ahler class on $Y$ such that $\alpha_Y:= K_Y+\omega_Y$ is nef and big. 
Let $S \subset Y$ be an integral surface such that $\alpha_Y^2 \cdot S=0$. Then $S$ is Moishezon and covered by $\alpha_Y$-trivial curves.
\end{lemma}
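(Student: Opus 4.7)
The plan is to reduce to Lemma~\ref{lemmaaraujo} on a resolution of $S$, the main work being to pin down the sign of $K_F\cdot\alpha_F$ via a combination of a threefold Hodge-type inequality and the surface Hodge-index theorem.

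Let $\pi\colon \upY \to Y$ be a resolution of singularities such that the strict transform $F := \upS$ of $S$ is smooth, and set $\mu := \pi|_F\colon F\to S$. Then $\upY$ is a smooth compact K\"ahler threefold and $F$ is a smooth K\"ahler surface. Put $\alpha_F := \mu^*(\alpha_Y|_S)$ and $\omega_F := \mu^*(\omega_Y|_S)$. Since $\omega_Y$ is modified K\"ahler and $S$ is a hypersurface, $\omega_Y|_S$ is a K\"ahler current and so $\omega_F$ is big. The class $\alpha_F$ is nef with $\alpha_F^2 = \alpha_Y^2\cdot S = 0$, and I may assume $\alpha_F\neq 0$ (else every curve in $F$ is $\alpha_F$-trivial). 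Adjunction, valid because $K_Y+S$ is $\Q$-Cartier, combined with the projection formula, $\alpha_Y = K_Y+\omega_Y$ and $\alpha_Y^2\cdot S = 0$ gives
\[
K_F\cdot \alpha_F \;=\; (K_Y+S)\cdot S\cdot \alpha_Y \;=\; S^2\cdot \alpha_Y \;-\; \omega_Y\cdot \alpha_Y\cdot S .
\]

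I claim both summands have the correct sign to make this negative. First, $S^2\cdot\alpha_Y \leq 0$ by the Hodge-type (Khovanskii--Teissier) inequality $(\alpha_Y^2\cdot D)^2 \geq \alpha_Y^3\cdot(\alpha_Y\cdot D^2)$ for the nef big class $\alpha_Y$ on the compact K\"ahler threefold $Y$ (obtained by approximating $\alpha_Y$ with K\"ahler classes, or by pulling back to $\upY$): with $D=S$ the hypothesis $\alpha_Y^2\cdot S = 0$ and $\alpha_Y^3 > 0$ force $\alpha_Y\cdot S^2\leq 0$. Second, bigness of $\omega_F$ gives a decomposition $\omega_F = \omega_0 + N$ with $\omega_0$ a K\"ahler class on $F$ and $N$ pseudo-effective; then $N\cdot\alpha_F \geq 0$, and the Hodge-index theorem on the K\"ahler surface $F$ shows $\omega_0\cdot\alpha_F > 0$ (otherwise the non-zero nef class $\alpha_F$ with $\alpha_F^2 = 0$ would be $\omega_0$-orthogonal, violating the Lorentzian signature). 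Therefore $\omega_Y\cdot\alpha_Y\cdot S = \omega_F\cdot\alpha_F > 0$ and $K_F\cdot\alpha_F < 0$.

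With $K_F\cdot\alpha_F<0$ and $\alpha_F$ nef, $K_F$ is not pseudo-effective on the K\"ahler surface $F$; by the Enriques--Kodaira classification $F$ is uniruled, hence projective (so $S = \mu(F)$ is Moishezon) and $H^2(F,\sO_F) = p_g(F) = 0$ by Serre duality. Lemma~\ref{lemmaaraujo} then applies to $F$ and $\alpha_F$, producing a covering of $F$ by $\alpha_F$-trivial curves whose images under $\mu$ cover $S$ by $\alpha_Y$-trivial curves. The delicate point is the strict inequality $K_F\cdot\alpha_F<0$: the threefold Hodge bound yields only $S^2\cdot\alpha_Y \leq 0$, so the strict negativity must come from $\omega_F\cdot\alpha_F > 0$, which requires $\omega_F$ (hence $\omega_Y|_S$) to be big --- and this is exactly why the modified K\"ahler hypothesis on $\omega_Y$ is used here.
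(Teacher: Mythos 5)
Your main case follows the paper's own proof in substance: resolve $S$, compute $K_F\cdot\alpha_F=(K_Y+S)\cdot S\cdot\alpha_Y=\alpha_Y\cdot S^2-\omega_Y\cdot\alpha_Y\cdot S$ by adjunction, bound $\alpha_Y\cdot S^2\le 0$ by the Hodge-index/Khovanskii--Teissier inequality for the nef and big class $\alpha_Y$, get $\omega_Y\cdot\alpha_Y\cdot S>0$ from the bigness of $\omega_Y|_S$ (the modified K\"ahler hypothesis) together with the Hodge index theorem on the surface, conclude that $K_F$ is not pseudoeffective, hence $H^2(F,\sO_F)=0$, hence $F$ is projective, and apply Lemma \ref{lemmaaraujo}. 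The paper phrases the threefold Hodge-index step as a signature statement on $N^1(Y)$ and obtains the strict inequality $\alpha_Y\cdot S^2<0$ after excluding $\alpha_Y\cdot S=0$, while you settle for $\le 0$ and let the $\omega$-term carry the strict sign; this is equivalent bookkeeping. A minor imprecision: when $S$ is non-normal the comparison of $K_F$ with $\mu^*\bigl((K_Y+S)|_S\bigr)$ involves subtracting the effective, non-exceptional conductor, so your displayed equality should be the inequality $K_F\cdot\alpha_F\le (K_Y+S)\cdot S\cdot\alpha_Y$ --- which is all you need, and is exactly the form that \eqref{sub} takes in the paper.

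The genuine gap is the case you dismiss in one parenthesis, namely $\alpha_F=0$ in $H^{1,1}(F,\R)$. Saying ``else every curve in $F$ is $\alpha_F$-trivial'' does not prove the lemma there: you must still show that $F$ admits a covering family of curves and that $S$ is Moishezon, and a general compact K\"ahler surface (a generic torus or a generic K3) contains no curves at all. Moreover your entire sign computation is carried out only under the assumption $\alpha_F\ne 0$ (the surface Hodge-index step needs it), so nothing you prove afterwards applies to this degenerate case, which does occur, e.g.\ for a divisor contracted by $\alpha_Y$ to a point. The paper devotes its first case precisely to this situation: if $\pi^*\alpha_Y|_S$ is numerically trivial, then $-\pi^*K_Y|_S=\pi^*\omega_Y|_S$ is a big class represented by a $\Q$-Cartier divisor, so the resolution is Moishezon, hence projective and covered by curves, all of which are automatically $\alpha_Y$-trivial. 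You have every ingredient for this one-line fix --- you already invoke the bigness of $\omega_Y|_S$ elsewhere --- but as written this case is not covered.
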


\begin{proof}
Let $\holom{\pi}{S'}{S}$ be the composition of normalisation and minimal resolution of $S$, then we have
\begin{equation} \label{sub}
K_{S'} = \pi^* K_S - E
\end{equation}
with $E$ an effective $\Q$-divisor on $S'$ (see \cite[Sect.4.1]{HP16}). Since $S$ is a K\"ahler space, the surface $S'$ is K\"ahler.

{\em 1st case. The pull-back $\pi^* \alpha_Y|_S$ is numerically trivial.}
Then we have $- \pi^* K_Y|_S = \pi^* \omega_Y|_S$. Since $\omega_Y$ is modified K\"ahler the restriction $\omega_Y|_S$
is a big (1,1)-class. Thus $- \pi^* K_Y|_S$ is a big line bundle on $S'$ and $S'$ is Moishezon. In particular $S'$ (and hence $S$) 
is covered by curves, they are all $\alpha_Y|_S$-trivial.

{\em 2nd case. The pull-back $\pi^* \alpha_Y|_S$ is not numerically trivial.}
The nef and big class $\alpha_Y$ defines an intersection form on $N^1(Y)$ which, by the Hodge index theorem, 
has signature $(1,d)$. Since $\alpha_Y^2 \cdot S=0$ this implies that
either $\alpha_Y \cdot S = 0$ or $\alpha_Y \cdot S^2<0$. The former case is excluded since $\pi^* \alpha_Y|_S$ is not numerically trivial.
By the adjunction formula and using $K_Y=\alpha_Y-\omega_Y$ we obtain
$$
K_S \cdot \alpha_Y|_S = (K_Y+S) \cdot S \cdot \alpha_Y = \alpha_Y^2 \cdot S - \alpha_Y \cdot \omega_Y \cdot S + \alpha_Y \cdot S^2.
$$ 
The first term is zero, the other terms are negative, so $K_S \cdot \alpha_Y|_S<0$.
By \eqref{sub} this implies
$$
K_{S'} \cdot \pi^* \alpha_Y|_S \leq K_S \cdot \alpha_Y|_S < 0.
$$
Since $\pi^* \alpha_Y|_S$ is a nef class, this shows that $K_{S'}$ is not pseudoeffective.
In particular we have $H^2(S', \sO_{S'}) = H^0(S', K_{S'})=0$. 
By Kodaira's criterion $S'$ is projective and we conclude with Lemma \ref{lemmaaraujo}.
\end{proof}

\begin{proposition} \label{propositionsmall}
Let $Y$ be a normal compact K\"ahler threefold with isolated rational singularities.
Let $\alpha_Y$ be a (1,1)-class on $Y$ that is nef and big. Suppose that every irreducible component of $\mbox{\rm Null}(\alpha_Y)$ has dimension one. Then there
exists a bimeromorphic morphism $\holom{\mu_Y}{Y}{Z}$ onto a normal compact complex space $Z$ such that 
every connected component of $\mbox{\rm Null}(\alpha_Y)$ is contracted onto a point,
and 
$$
Y \setminus \mbox{\rm Null}(\alpha_Y) \simeq Z \setminus
\mu_Y(\mbox{\rm Null}(\alpha_Y))
$$
is an isomorphism.
\end{proposition}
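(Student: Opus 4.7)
The plan is to construct a K\"ahler current in the class $\alpha_Y$ whose analytic singular support lies in $\mbox{\rm Null}(\alpha_Y) \cup \mbox{Sing}(Y)$, and then to apply Grauert's contractibility theorem to each connected component of the Null locus.

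By \cite{CT15}, $N := \mbox{\rm Null}(\alpha_Y)$ has only finitely many irreducible components; since each is one-dimensional by hypothesis, $N$ decomposes into finitely many connected components $E_1,\ldots,E_r$, each a connected union of curves. Choose a resolution $\holom{\mu}{X'}{Y}$; since $Y$ has isolated singularities, $\mu(\mbox{Exc}(\mu))$ is a finite subset of $\mbox{Sing}(Y)$, and the Null locus of the nef and big class $\mu^*\alpha_Y$ equals $\mbox{Exc}(\mu) \cup \mu^{-1}(N)$. By \cite[Thm.1.1]{CT15} this coincides with the non-K\"ahler locus of $\mu^*\alpha_Y$, so the construction used in the proof of Lemma \ref{lemmakaehlerclass} (combining Demailly's regularisation \cite{Dem92} with \cite[Thm.3.17(ii)]{Bou04}) yields a K\"ahler current $T'$ in $\mu^*\alpha_Y$ singular exactly along $\mbox{Exc}(\mu) \cup \mu^{-1}(N)$. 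The push-forward $T := \mu_* T'$ is then a K\"ahler current in $\alpha_Y$ whose singular support is contained in $N \cup \mbox{Sing}(Y)$.

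Fix a connected component $E_i$ and choose a relatively compact open neighborhood $U_i \subset Y$ disjoint from the other $E_j$ and containing no singular point of $Y$ off $E_i$. Writing $T|_{U_i} = \omega_0 + i\partial\bar\partial\varphi_i$ with $\omega_0$ a smooth Bott-Chern representative of $\alpha_Y|_{U_i}$ and $\varphi_i$ a quasi-plurisubharmonic potential that is smooth outside $E_i$ and tends to $-\infty$ along $E_i$, the strict positivity of $T$ as a current implies that $\varphi_i$ provides (combined with a standard exhaustion of $U_i$) a smooth strictly plurisubharmonic exhaustion of $U_i \setminus E_i$, so that $U_i$ is a strongly $1$-pseudoconvex neighborhood of $E_i$. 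Grauert's classical contractibility theorem then gives a proper bimeromorphic morphism $\holom{\mu_i}{U_i}{V_i}$ onto a normal complex space collapsing $E_i$ to a single point and biholomorphic on $U_i \setminus E_i$. Gluing the $\mu_i$ with the identity on $Y \setminus \bigcup_i U_i$ yields the desired global morphism $\holom{\mu_Y}{Y}{Z}$, with normality of $Z$ inherited locally from the $V_i$.

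The main technical obstacle is the pseudoconvexity verification in the presence of the isolated singularities of $Y$: points of $\mbox{Sing}(Y) \cap E_i$ are harmless because they are collapsed together with $E_i$, while points of $\mbox{Sing}(Y)$ off $N$ are avoided by shrinking $U_i$, which is possible since $\mbox{Sing}(Y)$ is finite. Modulo this analytic care, the proposition reduces to the pluripotential-theoretic inputs of \cite{CT15, Dem92, Bou04} producing a K\"ahler current singular precisely on the Null locus, together with Grauert's contraction criterion.
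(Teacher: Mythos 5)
Your overall strategy --- produce a K\"ahler current in $\alpha_Y$ with analytic singularities exactly along $\mbox{\rm Null}(\alpha_Y)$ via \cite{CT15}, \cite{Dem92} and \cite{Bou04}, then contract each connected component of the Null locus by Grauert's criterion --- is exactly the strategy of the proof that the paper invokes, namely that of \cite[Thm.7.12]{HP16}; the paper's own proof consists of the single observation that that argument only uses that the class is nef and big with one-dimensional non-K\"ahler locus. So you have identified the right route, and your reductions (finiteness of the components via \cite{CT15}, passage to a resolution, identification of the Null locus upstairs) are fine. However, the decisive step contains a genuine gap.

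The problem is the passage from the K\"ahler current to strong pseudoconvexity of $U_i$. You write $T|_{U_i}=\omega_0+i\partial\bar\partial\varphi_i$ with $\omega_0$ a \emph{smooth} representative of $\alpha_Y|_{U_i}$ and assert that the strict positivity of $T$ makes $\varphi_i$ a strictly plurisubharmonic exhaustion of $U_i\setminus E_i$. But $T\geq\delta\omega$ only makes $\varphi_i$ strictly $\omega_0$-plurisubharmonic, i.e.\ quasi-psh; it is honestly psh only if the smooth part $\omega_0$ can be absorbed, i.e.\ only if the class $\alpha_Y$ is $i\partial\bar\partial$-exact on a neighbourhood of $E_i$. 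Nothing in your argument uses that $E_i$ lies in $\mbox{\rm Null}(\alpha_Y)$ rather than merely in the singular set of some K\"ahler current, and without that hypothesis the conclusion is false: on $\PP^2$ the current $[L]+\omega_{FS}$ is a K\"ahler current in the class $2c_1(\sO(1))$ with analytic singularities exactly along a line $L$, yet $L$ admits no strongly pseudoconvex neighbourhood and is certainly not contractible. The missing --- and nontrivial --- step is to use $\alpha_Y\cdot C=0$ for every curve $C\subset E_i$ to trivialise the Bott--Chern class of $\alpha_Y$ on a suitable neighbourhood $U_i$ of $E_i$, so that $T|_{U_i}=i\partial\bar\partial\psi_i$ with $\psi_i$ strictly psh off $E_i$ and $\psi_i\to-\infty$ along $E_i$; this cohomological localisation is where most of the work in \cite[Sect.7]{HP16} lies. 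Even granting it, two further points need care: one must produce a sublevel set $\{\psi_i<c\}$ that is relatively compact in $U_i$ (properness of the exhaustion does not come for free from a potential controlled only in the interior), and one must check that this sublevel set contains no compact positive-dimensional analytic subsets other than $E_i$, since Grauert's Remmert reduction contracts the \emph{maximal} compact analytic subset.
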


\begin{remark}
We do not claim that $\alpha_Y$ is a pull-back of a $(1,1)$-class from $Z$. 
In fact this already fails for nef and big divisors on projective surfaces. 
\end{remark}

\begin{proof} The proof of \cite[Thm.7.12]{HP16} applies without changes. Indeed the first part of the proof consists in verifying
that the nef supporting class is nef and big and has one-dimensional non-K\"ahler locus (which is exactly our assumption). The rest
of the proof uses only this property.
\end{proof}

\begin{remark} \label{remarkMMP}
We need a technical remark concerning the use of MMP in our setting:
let $X$ be normal $\Q$-factorial compact K\"ahler threefold with terminal singularities. 
Suppose either that $K_X$ is pseudoeffective (or equivalently, that $X$ is not uniruled),
or that the MRC fibration is an almost holomorphic map $X \dashrightarrow B$ onto a compact K\"ahler surface. Let $\alpha$ be a nef and big class on $X$ and suppose that there exists
a curve $C \subset X$ such that
$$
\alpha \cdot C=0, \ K_X \cdot C<0.
$$
Then there exists a contraction \holom{\mu}{X}{X'} of a $K_X$-negative extremal ray $\Gamma$ such that $\alpha \cdot \Gamma=0$.

If $K_X$ is pseudoeffective, this is completely standard: by the cone theorem \cite[Thm.1.2]{HP16} we can decompose the class
$$
C = \eta + \sum \lambda_i \Gamma_i
$$
where $K_X \cdot \eta \geq 0$, the coefficients $\lambda_i \geq 0$ and $\Gamma_i$ are rational curves generating
extremal rays in the cone $\NAX$. Since $K_X \cdot C<0$ there exists at least one coefficient $\lambda_{i_0}>0$.
Since $\alpha$ is nef and $\alpha \cdot C=0$ we have $\alpha \cdot \Gamma_{i_0} =0$. By the contraction theorem
\cite[Thm.1.3]{HP16} we can contract the extremal ray $\R^+ \Gamma_{i_0}$.

If the base of the MRC fibration has dimension two, we only have weaker forms of the cone and contraction theorem
\cite{HoPe2}. Let $F$ be a general fibre of the MRC-fibration. Then $\alpha \cdot F>0$, since $\alpha$ is nef and big. 
Let $\omega_X$ be a K\"ahler class on $X$.
Then for all $\varepsilon>0$ the class
$$
\omega_\varepsilon := 2 \frac{\alpha + \varepsilon (\alpha \cdot F) \omega_X}{(\alpha + \varepsilon (\alpha \cdot F) \omega_X) \cdot F}
$$
is a normalised K\"ahler class in the sense of \cite[Defn.1.2.]{HoPe2}. Moreover, since $\alpha \cdot C=0$, we know that
for $0 < \varepsilon \ll 1$ the intersection number
$$
(K_X + \omega_\varepsilon) \cdot C < 0.
$$
Thus we can use \cite[Thm.3.13, Thm.3.15]{HoPe2} to conclude as above.
\end{remark}

\begin{proof}[Proof of Theorem \ref{theoremcontractionfaces}]
If $H^2(X, \sO_X)=0$ the nef and big class $\alpha$ is an $\R$-divisor class, so we can conclude with the basepoint-free theorem for $\R$-divisors. Suppose from now on that
$H^2(X, \sO_X) \neq 0$. Then $X$ is either not uniruled or the MRC fibration is an 
almost holomorphic map $X \dashrightarrow B$ onto a compact K\"ahler surface (cf. \cite[Sect.1]{HoPe2}).
Note that these properties are invariant under the minimal model program.

{\em Step 1. Running a MMP.}
We start by running an $\alpha_\bullet$-trivial $K_\bullet$-MMP. More precisely set 
$$
X_0:= X, \qquad \omega_0 := \omega, \qquad \alpha_0 :=\alpha.
$$
We define inductively a sequence of bimeromorphic maps as follows: for $i \in \N$,
suppose that there exists a curve $C \subset X$ such that $K_{X_{i-1}} \cdot C<0$ and $\alpha \cdot C=0$.
By Remark \ref{remarkMMP} 
there exists a contraction $\holom{\varphi_i}{X_{i-1}}{X_i}$ of a $K_{X_{i-1}}$-negative extremal ray $\Gamma_{i-1}$ such that 
$\alpha_{i-1} \cdot \Gamma_{i-1}=0$.
The contraction is not of fibre type, since otherwise $X_{i-1}$ is covered by $\alpha_{i-1}$-trivial curves, contradicting
the hypothesis that $\alpha_{i-1}$ is nef and big.
Thus the contraction is bimeromorphic, and we denote by $\merom{\mu_i}{X_{i-1}}{X_i}$ the divisorial contraction or, for a small ray,
its flip. Since the contraction is $\alpha_{i-1}$-trivial, the push-forward $(\mu_i)_* \alpha_{i-1} =: \alpha_i$ 
is a nef and big $(1,1)$-class $\alpha_i$ on $X_i$. We also set $(\mu_i)_* \omega_{i-1} =: \omega_i$, so
$K_{X_i}  + \omega_i = \alpha_i$. The class $\omega_i$ is not necessarily K\"ahler, but since $(\mu_i)^{-1}$ does not contract any divisors, it is a modified K\"ahler class.  

By Mori's termination of flips for terminal threefolds \cite{Mor88}, 
the MMP terminates after finitely many steps, so composing the $\mu_i$ we obtain 
a bimeromorphic morphism
$$
\mu : X \dashrightarrow Y
$$ 
such that $Y$ is a normal $\Q$-factorial compact K\"ahler threefold with terminal singularities, 
the class $\mu_* \alpha =: \alpha_Y$ is nef and big and
$\alpha_Y = K_Y + \omega_Y$ with $\omega_Y$ a modified K\"ahler class.
Since $Y$ is the outcome of the $\alpha_\bullet$-trivial $K_\bullet$-MMP,
we have $\alpha_Y \cdot C>0$ for every curve $C$ such that $K_Y \cdot C<0$. 

We claim that  $\mbox{\rm Null}(\alpha_Y)$ does not contain an irreducible surface $S$: otherwise 
we know by Lemma \ref{lemmacovered} that $S$ is covered 
$\alpha_Y$-trivial negative curves $(C_t)_{t \in T}$.
Since $\omega_Y$ is modified K\"ahler, the restriction $\omega_Y|_S$ is a big $(1,1)$-class. Thus
for a general curve $C_t$ we have $\omega_Y \cdot C_t>0$. Thus we obtain that $K_Y \cdot C_t<0$,
in contradiction to the preceding paragraph.

{\em Step 2. Construction of $Z$.} By the claim the Null locus of $\alpha_Y$ has pure dimension one.
By Proposition \ref{propositionsmall} there exists thus a bimeromorphic map $\holom{\mu_Y}{Y}{Z}$
contracting the connected components of $\mbox{\rm Null}(\alpha_Y)$ onto points.
We claim that the composition $\merom{\psi := \mu_Y \circ \mu}{X}{Z}$ is holomorphic: 
denote by $\Gamma_\mu \subset X \times Y$ the graph of $\mu$ and by  $\holom{p}{\Gamma_\mu}{X}$
the projection onto $X$.
Denote by $\Gamma_\psi \subset X \times Z$ the graph of $\psi$.
Then $\psi$ is a morphism if and only if the projection $\Gamma_\psi \rightarrow X$
is an isomorphism. 

Since $\psi$ is the composition of the meromorphic map $\mu$
with the holomorphic map $\mu_Y$ this is equivalent to showing that the $p$-fibres
are contained in $\mbox{\rm Null}(\alpha_Y)$\footnote{Since $\Gamma_\mu \subset X \times Y$, the $p$-fibres are naturally embedded in $Y$.}.
Yet for a threefold-MMP, the $p$-fibres are the (strict transforms) of flipped curves.
Since we ran an $\alpha_\bullet$-trivial MMP, the class $\alpha_\bullet$ is trivial on both the contracted and the flipped curves.
Thus the $p$-fibres are $\alpha_Y$-trivial.

Thus we have constructed a bimeromorphic morphism $\holom{\psi}{X}{Z}$ 
such that the restriction of $\alpha$ to every fibre is numerically trivial and the exceptional locus coincides with 
$\mbox{\rm Null}(\alpha)$.
Since $\alpha = K_X+\omega$ and $\omega$ is K\"ahler, this morphism is projective with relatively ample line bundle $-K_X$.
In particular by relative Kodaira vanishing \cite{Anc87} we have $R^j \psi_* \sO_X = 0$ for all $j \geq 0$. 
Since $X$ has terminal, hence rational singularities, this implies that $Z$ rational singularities.
By \cite[Lemma 3.3.]{HP16} there exists thus a (1,1)-class $\alpha_Z$ on $Z$ such that $\alpha = \psi^* \alpha_Z$.
By Lemma \ref{lemmakaehlerclass} the class $\alpha_Z$ is K\"ahler. 
\end{proof}

\begin{remark*}
In general the space $Z$ is not $\Q$-factorial, since $\mu_Y$ is a small contraction. 
Thus $Z$ does not necessarily have terminal singularities.
\end{remark*}

\section{Calabi-Yau case}

\begin{proof}[Proof of Proposition \ref{propositionCY}]
By the decomposition theorem \cite[Thm.1.2]{Dru17} \cite[Thm.9.2]{CHP16} there exists a finite cover
$\holom{\mu}{\tilde X}{X}$ that is \'etale over the nonsingular locus such that $\tilde X$ is either a torus, 
a Calabi-Yau threefold (in particular
$H^2(\tilde X, \sO_{\tilde X}) = 0$) or a product of an elliptic curve $E$ with a K3 surface $S$.

If $\tilde X$ is a torus, the pull-back $\mu^* \alpha$ is K\"ahler. Thus $\alpha$ itself is K\"ahler and we are done.
If $\tilde X$ is a Calabi-Yau, the $(1,1)$-class $\alpha$ is an $\R$-divisor class and we conclude by the basepoint-free theorem. 

Thus we are left to deal with the case $\tilde X \simeq E \times S$. Set $\tilde \alpha := \mu^* \alpha$. For a K3 surface we have $H^0(S, \Omega_S)=0$, which immediately implies 
$$
H^{1,1}_{\rm BC}(X) = p_E^* H^{1,1}_{\rm BC}(E) \times p_S^* H^{1,1}_{\rm BC}(S),
$$
where $\holom{p_E}{\tilde X}{E}$ and $\holom{p_S}{\tilde X}{S}$ are the projections on the factors. Thus we can write
$$
\tilde \alpha = \lambda F + p_S^* \alpha_S
$$
where $F$ is a $p_E$-fibre, $\lambda \in \R$, and $\alpha_S$ is a $(1,1)$-class on $S$. The restriction of the nef and big class
$\tilde \alpha$ to a general $p_E$-fibre is nef and big, so $\alpha_S$ is nef and big. 
The restriction of the nef and big class
$\tilde \alpha$ to a general $p_S$-fibre is nef and big, so $\lambda>0$.

Let now $Z$ be an irreducible component on the null locus of $\tilde \alpha$. If $Z$ is a curve, we have
$$
0 = \tilde \alpha \cdot Z = \lambda F \cdot Z + \alpha_S \cdot (p_S)_* Z.
$$ 
By what precedes we obtain $F \cdot Z=0$, so $Z$ is contained in a fibre $\fibre{p_E}{t_0}$. 
Yet then $Z$ trivially deforms in a family $Z_t = (t \times Z)_{t \in E}$, and $\alpha \cdot Z_t=0$ for all the curves in this 
family. Thus $Z$ is not an irreducible component of the null locus.

This shows that any irreducible component of the null locus is a surface $Z$. If the map $p_S|_Z : Z \rightarrow S$
is generically finite, the pull-back $p_S|_Z^* \alpha_S$ is big. Hence $\tilde \alpha|_Z$ is nef and big, but this contradicts
the property of being in the null locus. Thus $p_S(Z)$ is an irreducible curve, and since 
$$
Z \subset \fibre{p_S}{p_S(Z)} = E \times p_S(Z)
$$
is irreducible, we see that $Z= E \times p_S(Z)$. 
Since $Z$ is in the null locus we have
$$
0 = \tilde \alpha^2 \cdot Z = \lambda^2 F^2 \cdot Z + 2 \lambda F \cdot  p_S^* \alpha_S \cdot Z + (p_S^* \alpha_S)^2 \cdot Z
= 2 \lambda \alpha_S \cdot p_S(Z).
$$
Since $\lambda>0$ we see that $p_S(Z)$ is in the null-locus of $\alpha_S$, hence it is a $(-2)$-curve (cf. the proof of \cite[Thm.1.3]{FT17}). In conclusion we obtain that
$$
\mbox{\rm Null}(\tilde \alpha) = E \times \mbox{\rm Null}(\alpha_S)
$$
By \cite[Thm.1.3]{FT17} there exists a bimeromorphic map $\holom{\psi_S}{S}{S'}$ onto a normal compact surface $S'$
and a K\"ahler class $\alpha_{S'}$ on $S'$ such that $\alpha_S = \psi_S^* \alpha_{S'}$.
We then set
$$
\tilde \psi := id_E \times \psi_S : \tilde X = E \times S \rightarrow E \times S' =: \tilde X'
$$
and $\lambda F + \alpha_{S'}$ is a K\"ahler class on $\tilde X'$ such that $\tilde \psi^* (\lambda F + \alpha_{S'}) = \tilde \alpha$.

Let us now show that this map descends to $X$: up to replacing $\mu$ by its Galois closure, we have $X = \tilde X/G$ where $G$ is the Galois group of $\mu$.  Any automorphism  $f$ on $E \times S$ is of the form $f_E \times f_S$ \cite[p.8]{BeaKatata}, thus
$G$ acts on the factors $E$ and $S$.  The class $\mu^* \alpha$ is $G$-invariant,
so $\alpha_S$ is invariant under the $G$-action on $S$. The Null locus $\mbox{Null}(\alpha_S)$ being $G$-invariant,
we see that there is an induced $G$-action on $S'$ that makes $\psi_S$ is $G$-equivariant. 
Since $\tilde \psi = id_E \times \psi_S$ there is an induced $G$-action on $\tilde X'$ that makes $\tilde \psi$ is $G$-equivariant. 
We set
$Z := \tilde X'/G$ and denote by $\holom{\psi}{X=X'/G}{Z=\tilde X'/G}$ the bimeromorphic morphism
induced by $\tilde \psi$.
Since $\psi$ is crepant and $X$ has terminal singularities, the space $Z$ has canonical, hence rational singularities.

Denote by $\holom{\mu'}{\tilde X'}{Z=\tilde X'/G}$ the finite cover. 
Since $\mu^* \alpha$ is $G$-invariant, 
the K\"ahler class $\lambda F + \alpha_{S'}$ is $G$-invariant and defines a K\"ahler class $\alpha_Z$ on $Z$.
By construction we have $\alpha = \psi^* \alpha_Z$. 
\end{proof}

\begin{remark*}
If $\tilde X \simeq E \times S$, then $X$ is in general not a product. Let $f_E$ be a fixpoint free involution (e.g. a translation by
$2$-torsion point), and $f_S$ an involution with fixed points. Set $X := \tilde X/\langle f_E \times f_S \rangle$ and denote by
$p : X \rightarrow S/\langle f_S\rangle$ the map induced by the projection $p_S$. Then $p$ has multiple fibres over the fixed points, so $X$ is not a product.
\end{remark*}

%\bibliographystyle{alpha}
%\bibliography{biblio}

\end{document}